\documentclass[12pt,a4paper]{article}
%
%
%
\usepackage{theorem}
\setlength{\textheight}{42\baselineskip}
\setlength{\textwidth}{400pt} \setlength{\oddsidemargin}{20pt}
\setlength{\evensidemargin}{20pt} \setlength{\topmargin}{0cm}
\parindent=0cm
\parskip=5pt
\newtheorem{lemma}{Lemma}
\newtheorem{proposition}[lemma]{Proposition}
\newtheorem{theorem}[lemma]{Theorem}
\newtheorem{corollary}[lemma]{Corollary}
\newtheorem{exampl}[lemma]{Example}
\newenvironment{example}{\begin{exampl}\upshape}{\hfill$\Box$\end{exampl}}

\newcommand{\R}{{\bf R}}

\newcommand{\rme}{{\rm e}}
\newcommand{\rmd}{{\rm d}}

\newcommand{\cB}{{\cal B}}

\newcommand{\cE}{{\cal E}}

\newcommand{\cV}{{\cal V}}
\newcommand{\sig}{\sigma}
\newcommand{\alp}{\alpha}
\newcommand{\bet}{\beta}

\newcommand{\lam}{\lambda}

\newcommand{\eps}{\varepsilon}


\newcommand{\tr}{{\rm tr}}
\newcommand{\Dom}{{\rm Dom}}
\newcommand{\Quad}{{\rm Quad}}

\newcommand{\norm}{\Vert}

\newcommand{\Schrodinger}{Schr\"odinger }


\newcommand{\pr}{\prime}

\newcommand{\eqref}[1]{(\ref{#1})}

\newenvironment{proof}{\textbf{Proof}}{\hfill$\Box$}

\newenvironment{choices}{ \left\{ \begin{array}{ll} }{\end{array}\right.}

\usepackage{amssymb}
\usepackage{hyperref}
\usepackage{graphicx}
\title{An Inverse Spectral Theorem}
\author{E B Davies}
\date{7 September 2010}
\begin{document}
\maketitle
\begin{abstract}
We prove a substantial extension of an inverse spectral theorem of
Ambarzumyan, and show that it can be applied to arbitrary compact
Riemannian manifolds, compact quantum graphs and finite
combinatorial graphs, subject to the imposition of Neumann (or
Kirchhoff) boundary conditions.
\end{abstract}

Keywords: inverse problems, Ambarzumyan, spectral geometry, heat kernel, heat trace asymptotics, quantum graph\\
MSC subject classification: 35R30, 34A55, 58J53, 35P20, 34L15

\section{Introduction}\label{intro}

Let $X$ be a compact metric space provided with a finite measure
$\rmd x$. Let $H_0$ be a non-negative self-adjoint operator acting
on $L^2(X,\rmd x)$. We assume that $H_0$ has discrete spectrum $\{
\lam_n\}_{n=1}^\infty$, where the eigenvalues are written in
increasing order and repeated according to multiplicity, and that
its smallest eigenvalue $\lam_1=0$ has multiplicity $1$ with
corresponding eigenfunction $\phi_1=|X|^{-1/2}$, where $|X|$ is the
volume of $X$. Given a bounded real potential $V$ on $X$, we put
$H=H_0+V$, so that $H$ also has discrete spectrum, which we denote by $\{
\mu_n\}_{n=1}^\infty$. The problem is to
write down a general list of abstract conditions on $H_0$ which
imply that if $H$ and $H_0$ have the same spectrum, taking
multiplicities into account, then $V$ is identically zero.

The classical theorem of Ambarzumyan solved this problem when
$X=[a,b]$ and $H_0f=-\frac{\rmd^2f}{\rmd x^2}$, subject to Neumann
boundary conditions at $a$ and $b$, \cite{ambar}. The result is also
known for periodic boundary conditions, but the corresponding result
for Dirichlet boundary conditions is false; the best known inverse spectral theorem in this context depends on knowing the spectrum of $H$ for two different sets of boundary conditions at
$a,\, b$, \cite{borg}. Ambarzumyan's theorem has
been extended to trees with a finite number of edges,
\cite{CP,piv,LY}, by combining the Sturm-Liouville theory with a
careful boundary value analysis. The present paper extends it to a
much broader context by adapting a range of classical techniques
from the theory of the heat equation in several dimensions. Theorem~\ref{maincorollary}, establishes that
If $\mu_1\geq 0$ and $\limsup_{n\to\infty}(\mu_n-\lam_n)\leq 0$ then
$V=0$, subject to certain generic conditions on the heat kernels involved.

This theorem can be applied to arbitrary compact Riemannian manifolds, compact quantum graphs and finite combinatorial graphs, subject to Neumann (or Kirchhoff) boundary conditions. Our proof depends on a list of abstract hypotheses that are known to be satisfied in a wide variety of situations. The hypotheses are by no means the weakest possible; the strategy of our
proof is more important than the detailed assumptions, and can be
adapted to other cases.

The material in Sections~\ref{H0} and \ref{H} is of a general
character, and the reader may prefer to start in Section~\ref{MR}.
In Section~\ref{QG} we prove that all of the hypotheses hold for a
finite connected quantum graph $X$, subject to Kirchhoff boundary
conditions at every vertex.

Before proceeding, I should like to thank Professor Chun-Kong Law for a very stimulating lecture in the Isaac Newton Institute in July 2010, where the author learned about this problem.

\section{Properties of $H_0$}\label{H0}

We start by listing the hypotheses that will be used in the proofs.

\begin{description}
\item[(H1)] The operator $\rme^{-H_0t}$ has a non-negative integral kernel $K_0(t,x,y)$ for $t>0$, which is continuous on $(0,\infty)\times X\times X$.
\item[(H2)] There exist constants $c>0$ and $d>0$ such that
$0\leq K_0(t,x,x)\leq ct^{-d/2}$ for all $t\in (0,1)$.
\item[(H3)] There exists a constant $a>0$ such that
$\lim_{t\to 0}t^{d/2}K_0(t,x,x)=a$ for all $x\notin N$, where $N$ is
a set of zero measure.
\item[(H4)] The smallest eigenvalue $\lam_1$ of the operator $H_0$ equals $0$ and has multiplicity $1$. The corresponding eigenfunction is $\phi_1=|X|^{-1/2}$.
\end{description}

We do not assume that $H_0$ is a second order elliptic differential
operator, because we wish to allow other possibilities. For example
$H_0$ could be a fractional power of a Laplacian. The case in which
$H_0$ is a discrete Laplacian on $l^2(X)$ for some finite set $X$ is
discussed in Example~\ref{finite}. The conditions (H1) to (H4) have
been examined in some detail in \cite{HKST}, from which we quote the
following consequences of (H1) and (H4).

The quadratic form defined on $\Quad(H_0)=\Dom(H_0^{1/2})$ by
\[
Q_0(f)=\langle H_0^{1/2}f,H_0^{1/2}f\rangle
\]
is a Dirichlet form; see \cite[Theorem~1.3.2]{HKST}. The
one-parameter semigroup $T_t=\rme^{-H_0t}$ on $L^2(X,\rmd x)$ is an
irreducible symmetric Markov semigroup. It extends to a
one-parameter contraction semigroup on $L^p(X,\rmd x)$ for all
$1\leq p\leq \infty$, with the proviso that for $p=\infty$ the
semigroup is not strongly continuous; see \cite[Prop.~1.4.3]{HKST}.
Mercer's theorem, \cite[Prop.~5.6.9]{LOTS}, implies that the
operator $\rme^{-H_0t}$ is trace class for all $t>0$ and
\[
\tr[\rme^{-H_0t}]=\int_X K_0(t,x,x) \, \rmd x.
\]
In particular
\[
\sum_{n=1}^\infty \rme^{-\lam_n t}<\infty
\]
for all $t>0$, where $\{ \lam_n\}_{n=1}^\infty$ are the eigenvalues
of $H_0$ written in increasing order and repeated according to
multiplicities. If $\phi_n$ are the corresponding normalized
eigenfunctions then by applying the formula
\[
\rme^{-\lam_nt}\phi_n(x)=\int_X K_0(t,x,y)\phi_n(y) \, \rmd y
\]
we deduce that every eigenfunction $\phi_n$ is bounded and
continuous on $X$. The semigroup $T_t$ is ultracontractive in the
sense of \cite[Section 2.1]{HKST} and the series
\[
K_0(t,x,y)=\sum_{n=1}^\infty \rme^{-\lam_n t} \phi_n(x)\phi_n(y)
\]
converges uniformly on $[\alp,\infty)\times X\times X$ for every
$\alp>0$; see \cite[Theorem 2.1.4]{HKST}. This implies that
$K_0(t,x,y)$ converges uniformly to $|X|^{-1}$ on $X\times X$ as
$t\to\infty$, so
\begin{equation}
\frac{1}{2|X|}\leq K_0(t,x,y)\leq \frac{3}{2|X|}\label{lowerupper}
\end{equation}
for all large enough $t>0$.

The condition (H2) is much more specific, but necessary and
sufficient conditions for its validity are now classical.

\begin{proposition}\label{logsob} Let $H$ be a self-adjoint operator acting in $L^2(X,\rmd x)$. If $H$ is bounded below and $\rme^{-Ht}$ is positivity preserving for all $t\geq 0$ then the following are equivalent, the constant $d>0$ being the same in all cases.
\begin{enumerate}
\item The operator $\rme^{-Ht}$ satisfies
\[
\norm \rme^{-Ht}f\norm_\infty \leq c_1 t^{-d/4}\norm f\norm_2
\]
for some $c_1>0$, all $f\in L^2(X,\rmd x)$ and all $t\in (0,1)$.
\item The bound
\[
\int_X  f^2\log(f)\rmd x \leq \eps Q(f)+\bet(\eps)\norm f\norm_2^2
+\norm f\norm_2^2\log(\norm f\norm_2)
\]
holds for all $0\leq f\in \Quad(H)\cap L^1\cap L^\infty$ and all
$\eps \in (0,1)$, where $\bet(\eps)=c_2-(d/4)\log(\eps)$ for some
$c_2>0$. See \cite[Example~2.3.3]{HKST}.
\item The bound
\[
\norm f\norm_2^{2+4/d}\leq c_3\left( Q(f)+\norm
f\norm_2^2\right)\norm f\norm_1^{4/d}
\]
holds for some $c_3>0$ and all $0\leq f\in \Quad(H) \cap L^1$. See
\cite[Corollary~2.4.7]{HKST}.
\item Assuming $d>2$, the bound
\[
\norm f\norm_{2d/(d-2)}^{2}\leq c_4\left( Q(f)+\norm
f\norm_2^2\right)
\]
holds for some $c_4>0$ and all $f\in\Quad(H)$. See
\cite[Corollary~2.4.3]{HKST}.
\end{enumerate}
All of the above conditions imply that $\rme^{-Ht}$ has a measurable
heat kernel $K$ that satisfies
\begin{equation}
0\leq K(t,x,y) \leq c_5 t^{-d/2}\label{ptbound}
\end{equation}
for some $c_5>0$, almost all $x,y\in X$ and all $t\in (0,1)$.
Conversely, if $\norm \rme^{-Ht}\norm_{L^\infty\to L^\infty} \leq
c_6$ for all $t\in (0,1)$ then (\ref{ptbound}) implies the previous
conditions. See \cite[Lemma~2.1.2]{HKST}.

\end{proposition}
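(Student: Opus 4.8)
The plan is to prove (1)--(4) equivalent by a cycle of implications and then to read off \eqref{ptbound} by duality; the whole thing is assembled from \cite{HKST}, and I indicate the structure. A harmless preliminary reduction: since every estimate is confined to $t\in(0,1)$ and every right-hand side already carries a $\norm f\norm_2^2$ (resp.\ $Q(f)+\norm f\norm_2^2$) term, one may replace $H$ by $H+I$ throughout, suppressing the shift in the notation; this makes $\rme^{-Ht}$ a contraction on $L^2$ and on $L^\infty$, hence on every $L^p$ by interpolation and duality, and makes the relevant form strictly positive. I would then establish $(2)\Leftrightarrow(1)$, $(1)\Leftrightarrow(3)$, and $(4)\Leftrightarrow(3)$ (the last needing $d>2$), and finally extract \eqref{ptbound}.

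The substantive step is $(2)\Rightarrow(1)$, via the Davies--Simon differentiation argument. Fix $0\le f\in L^2\cap L^1\cap L^\infty$ and $t>0$, and take the exponent function $p(s)=2t/(t-s)$ on $(0,t)$, for which $p(0^+)=2$, $p(s)\to\infty$ as $s\uparrow t$, and $2p'(s)p(s)^{-2}=1/t$. Put $F(s)=\norm \rme^{-Hs}f\norm_{p(s)}$. Differentiating $\log F$, using the elementary formula for $\frac{\rmd}{\rmd s}\norm u_s\norm_{p(s)}$ and the Dirichlet-form inequality $\langle Hu,u^{p-1}\rangle\ge \frac{4(p-1)}{p^2}Q(u^{p/2})$, one gets an estimate for $\frac{\rmd}{\rmd s}\log F(s)$ whose energy term is annihilated by applying (2) with $\eps=\eps(s)$ chosen so that $\eps(s)\,p'(s)=2(p(s)-1)$, i.e.\ $\eps(s)=t-s^2/t$; what survives is $\frac{\rmd}{\rmd s}\log F(s)\le \bet(\eps(s))/t$. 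Integrating over $(0,t)$ and using $\bet(\eps)=c_2-(d/4)\log\eps$, the right-hand side equals $\frac1t\int_0^t\bet(\eps(s))\,\rmd s=c_2-(d/4)\log t$ up to an additive constant; since $F(0^+)=\norm f\norm_2$ and $F(t)=\norm \rme^{-Ht}f\norm_\infty$, exponentiating yields $\norm \rme^{-Ht}f\norm_\infty\le c_1 t^{-d/4}\norm f\norm_2$, and a density argument gives (1). The reverse implication $(1)\Rightarrow(2)$ is the companion half of the same equivalence, which I would quote from \cite[Section~2.3]{HKST}.

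For $(1)\Rightarrow(3)$ I would use Nash's energy argument: self-adjointness turns (1) into $\norm \rme^{-Ht}\norm_{L^1\to L^2}\le c_1 t^{-d/4}$, and for $0\le f\in L^1\cap L^2$ the identity $\norm f\norm_2^2-\norm \rme^{-Ht}f\norm_2^2=2\int_0^t Q(\rme^{-Hs}f)\,\rmd s$ together with $Q(\rme^{-Hs}f)\le Q(f)$ gives $\norm f\norm_2^2\le 2t\,(Q(f)+\norm f\norm_2^2)+c_1^2 t^{-d/2}\norm f\norm_1^2$ for all $t\in(0,1)$; optimising over $t$ (the case where the minimiser leaves $(0,1)$ being handled directly) yields (3). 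Conversely, inserting (3) into the differential inequality for $u(t)=\norm \rme^{-Ht}f\norm_2^2$, where $u'(t)=-2Q(\rme^{-Ht}f)$ and $\norm \rme^{-Ht}f\norm_1\le\norm f\norm_1$, and integrating the resulting Bernoulli-type inequality, gives $\norm \rme^{-Ht}\norm_{1\to2}\le c\,t^{-d/4}$, hence (1) by duality. As for (4): $(4)\Rightarrow(3)$ is H\"older's inequality $\norm f\norm_2\le\norm f\norm_{2d/(d-2)}^{d/(d+2)}\norm f\norm_1^{2/(d+2)}$ raised to the power $2+4/d=2(d+2)/d$, while the converse for $d>2$ is the passage from the ultracontractivity (or Nash) bound to the Sobolev inequality, in which the naive subordination integral $\int_0^\infty s^{-1/2}\norm \rme^{-(H+I)s}\norm_{2\to 2d/(d-2)}\,\rmd s$ diverges logarithmically at $s=0$ and the endpoint has to be recovered by the standard refinement; I would quote this from \cite[Corollary~2.4.3]{HKST}.

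Finally, \eqref{ptbound}: once (1) holds, $\norm \rme^{-Ht}\norm_{L^1\to L^\infty}\le\norm \rme^{-Ht/2}\norm_{2\to\infty}\norm \rme^{-Ht/2}\norm_{1\to2}\le c_5 t^{-d/2}$, so the Dunford--Pettis theorem supplies a measurable kernel $K(t,\cdot,\cdot)$ with $\norm K(t,\cdot,\cdot)\norm_\infty\le c_5 t^{-d/2}$, and $K\ge0$ a.e.\ by positivity preservation; conversely, given $\norm \rme^{-Ht}\norm_{\infty\to\infty}\le c_6$, Riesz--Thorin interpolation of $\norm \rme^{-Ht}\norm_{1\to\infty}\le c_5 t^{-d/2}$ against $\norm \rme^{-Ht}\norm_{\infty\to\infty}\le c_6$ returns $\norm \rme^{-Ht}\norm_{2\to\infty}\le\sqrt{c_5 c_6}\,t^{-d/4}$, i.e.\ (1). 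I expect $(2)\Rightarrow(1)$ to be the only genuine obstacle: the joint choice of $p(s)$ and of $\eps(s)$ that both cancels the energy term \emph{and} makes the surviving integral reproduce exactly the power $t^{-d/4}$ forced by the affine dependence of $\bet$ on $\log\eps$ is the one delicate computation, whereas the remaining implications are duality, H\"older's inequality, and elementary ODE estimates, all carried out in detail in \cite{HKST}.
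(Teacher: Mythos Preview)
The paper does not supply a proof of this proposition at all: the statement itself already carries the citations to \cite{HKST} for each implication, and the text moves on directly to the remark about comparable quadratic forms and then to the example. In other words, the paper's ``proof'' is precisely the list of references embedded in the statement.

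Your sketch is a correct and faithful reconstruction of those cited arguments. The Davies--Simon hypercontractivity computation for $(2)\Rightarrow(1)$ with $p(s)=2t/(t-s)$ and $\eps(s)=t-s^2/t$ is exactly the content of \cite[Section~2.2--2.3]{HKST}; the Nash energy/ODE argument for $(1)\Leftrightarrow(3)$ is \cite[Theorem~2.4.6]{HKST}; the H\"older step for $(4)\Rightarrow(3)$ and the Varopoulos-type converse are \cite[Corollaries~2.4.3 and 2.4.7]{HKST}; and the duality/Dunford--Pettis extraction of the kernel bound is \cite[Lemma~2.1.2]{HKST}. So there is no discrepancy to report: you have simply written out what the paper is content to quote.
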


An important feature of all these conditions is that they depend on
the quadratic form $Q$ and can therefore be transferred from one
operator to another if the quadratic forms are comparable.

\begin{example}
Let $H_0=-\frac{\rmd^2}{\rmd x^2}$ act in $L^2((0,\infty), \rmd x)$
subject to Neumann boundary conditions at $0$. Then
\[
K_0(t,x,y)=(4\pi t)^{-1/2}\left(
\rme^{-(x-y)^2/(4t)}+\rme^{-(x+y)^2/(4t)}\right)
\]
so
\[
0\leq K_0(t,x,y)\leq 2 (4\pi t)^{-1/2}
\]
for all $t>0$ and $x,y\in (0,\infty)$. Moreover
\[
\lim_{t\to 0} (4\pi t)^{1/2}K_0(t,x,x)=\begin{choices}
1&\mbox{ if $x>0$,}\\
2&\mbox{ if $x=0$.}
\end{choices}
\]
This explains the need for an exceptional set of zero measure in
(H3).

One may also solve the corresponding example in
$\R^N_+=(0,\infty)^n$ subject to Neumann boundary conditions on the
boundary. In this case the possible values of $\lim_{t\to 0} (4\pi
t)^{1/2}K_0(t,x,x)$ are the integers $2^r$ where $0\leq r\leq n$. A related result for general convex sets is given in \cite[Theorem~12]{lipschitz}.
\end{example}

\section{Properties of $H=H_0+V$}\label{H}

Given a self-adjoint operator $H_0$ satisfying the hypotheses (H1)
to (H4), we put $H=H_0+V$ where $V$ is a bounded real-valued
potential; this condition can surely be weakened. An application of
the Trotter product formula or a perturbation expansion imply that
\[
\norm \rme^{-Ht}\norm _{L^p\to L^p} \leq \rme^{\norm V\norm_\infty
t}
\]
for all $p\in [1,\infty]$ and $t\geq 0$. By using standard
variational methods one sees that $H$ has discrete spectrum and that
its eigenvalues $\{\mu_n\}_{n=1}^\infty$, written in increasing
order and repeated according to multiplicity, satisfy
\[
\lam_n - \norm V\norm_\infty \leq \mu_n\leq \lam_n + \norm
V\norm_\infty
\]
for all $n\geq 1$. Hence
\[
0\leq \rme^{-\norm V\norm_\infty t} \tr[\rme^{-H_0 t}]\leq
\tr[\rme^{-H t}]\leq \rme^{\norm V\norm_\infty t}\tr[\rme^{-H_0 t}]
\]
for all $t>0$.

The proof of the following theorem involves standard ingredients,
\cite{chavel,HKST}, but we write it out in detail for the sake of
completeness.

\begin{theorem}\label{mult1}
The operator $\rme^{-Ht}$ has a non-negative continuous kernel $K$
for all $t>0$ and $x,y\in X$. The kernel satisfies
\begin{equation}
0\leq \rme^{-\norm V\norm_\infty t} K_0(t,x,y) \leq  K(t,x,y)\leq
\rme^{\norm V\norm_\infty t} K_0(t,x,y)\label{TP}
\end{equation}
for all $t>0$. The smallest eigenvalue $\mu_1$ of $H$ has
multiplicity $1$.
\end{theorem}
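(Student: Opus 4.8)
The plan is to construct the kernel $K$ from the perturbation (Dyson) expansion of $\rme^{-Ht}$, to read off the sharp two--sided estimate \eqref{TP} from the Trotter product formula, and finally to deduce simplicity of $\mu_1$ by a Perron--Frobenius argument. Throughout only the hypotheses (H1)--(H4) and their consequences recorded above are used; in particular no underlying manifold or stochastic structure is invoked.

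First I would expand
\[
\rme^{-Ht}=\sum_{k=0}^\infty(-1)^k\int_{0\le s_1\le\cdots\le s_k\le t}\rme^{-H_0(t-s_k)}V\rme^{-H_0(s_k-s_{k-1})}V\cdots V\rme^{-H_0s_1}\;\rmd s_1\cdots\rmd s_k,
\]
a series converging in operator norm because its $k$th term has norm at most $t^k\norm V\norm_\infty^k/k!$. For $k\ge1$ that term has the iterated integral kernel $K_k(t,x,y)$ obtained by replacing each $\rme^{-H_0\tau}$ by $K_0(\tau,\cdot,\cdot)$ and each $V$ by $V(z_j)$; integrating out the inner variables and iterating the semigroup identity $\int_XK_0(a,x,z)K_0(b,z,y)\,\rmd z=K_0(a+b,x,y)$, valid for $a,b>0$, yields the clean majorisation
\[
|K_k(t,x,y)|\le\frac{t^k\norm V\norm_\infty^k}{k!}\,K_0(t,x,y).
\]
Each $K_k$ is continuous on $(0,\infty)\times X\times X$: since $V$ occurs only at interior points, the $x$ and $y$ dependence enters solely through the continuous factors $K_0(t-s_k,x,z_k)$ and $K_0(s_1,z_1,y)$, and splitting the time simplex according to whether every consecutive gap exceeds $\eta$ shows that $K_k$ is, near any point, a uniform limit as $\eta\to0$ of continuous functions (the integrand being jointly uniformly continuous where the gaps are bounded below, while the remaining contribution is controlled uniformly by the above majorant times a factor tending to $0$). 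Summing, $K:=\sum_k K_k$ is continuous on $(0,\infty)\times X\times X$; and since $K_0(t,\cdot,\cdot)$ is bounded and $|X|<\infty$, the kernel series converges in Hilbert--Schmidt norm as well, so $K(t,\cdot,\cdot)$ is genuinely the integral kernel of $\rme^{-Ht}$.

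For \eqref{TP} I would invoke the Trotter product formula $\rme^{-Ht}=\lim_{n\to\infty}\left(\rme^{-H_0t/n}\rme^{-Vt/n}\right)^n$, the limit holding in the strong operator topology of $L^2(X,\rmd x)$; it applies since $V$ is bounded. The $n$th approximant has the iterated integral kernel $K^{(n)}(t,x,y)$ built from $n$ factors $K_0(t/n,\cdot,\cdot)$ alternating with $n$ multiplication factors, each of the latter lying in $[\rme^{-\norm V\norm_\infty t/n},\rme^{\norm V\norm_\infty t/n}]$. By (H1) this is non-negative, and collapsing the $K_0$ factors with the semigroup identity gives
\[
\rme^{-\norm V\norm_\infty t}K_0(t,x,y)\le K^{(n)}(t,x,y)\le\rme^{\norm V\norm_\infty t}K_0(t,x,y)
\]
for every $n$. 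These kernels are dominated by the fixed function $\rme^{\norm V\norm_\infty t}K_0(t,\cdot,\cdot)\in L^2(X\times X)$ while the operators converge strongly on $L^2(X,\rmd x)$; passing to a weakly convergent subsequence in $L^2(X\times X)$ and identifying its limit with the kernel $K$ constructed above, the displayed inequalities survive almost everywhere, hence everywhere by continuity. This is \eqref{TP}, and in particular $K\ge0$.

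Finally, \eqref{TP} together with \eqref{lowerupper} gives $K(t,x,y)\ge\rme^{-\norm V\norm_\infty t}/(2|X|)>0$ for all sufficiently large $t$ and all $x,y\in X$, so $\rme^{-Ht}$ is positivity improving for such $t$. Being in addition a positivity preserving self-adjoint semigroup, and having discrete spectrum with least eigenvalue $\mu_1$ by the variational bounds recorded above, the Perron--Frobenius theorem for positivity improving self-adjoint semigroups shows that $\mu_1$ is simple, with a strictly positive eigenfunction. The step requiring the most care is the continuity of $K$: (H2) permits $K_0(s,\cdot,\cdot)$ to blow up as $s\to0$, so the iterated integrands cannot be estimated crudely and all bounds must be routed through the Chapman--Kolmogorov identity; one must also take care that the whole argument uses only (H1)--(H4).
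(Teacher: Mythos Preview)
Your argument is correct and close in spirit to the paper's, but the continuity step is organised differently and it is worth recording the contrast. The paper does not sum the full Dyson series. Instead it first invokes Proposition~\ref{logsob} (comparability of $Q$ and $Q_0$) to obtain a bounded \emph{measurable} kernel $K(t,\cdot,\cdot)$ for $\rme^{-Ht}$, then uses the truncated Duhamel identity
\[
\rme^{-Ht}=\rme^{-H_0t}-\int_0^t\rme^{-H_0(t-s)}V\rme^{-H_0s}\,\rmd s
+\int_0^t\!\!\int_0^s\rme^{-H_0(t-s)}V\rme^{-H(s-u)}V\rme^{-H_0u}\,\rmd u\,\rmd s,
\]
where the middle factor of the remainder is $\rme^{-H(s-u)}$ rather than $\rme^{-H_0(s-u)}$; the just-established bound \eqref{TP} is then fed back into this remainder to prove continuity and the estimate $|M(t,x,y)|\le ct^{2-d/2}$. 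Your route avoids the appeal to Proposition~\ref{logsob} entirely by working only with $K_0$-factors and the Chapman--Kolmogorov identity, which is cleaner for the theorem as stated. On the other hand, the paper's organisation is tailored to the next step: the two-term expansion \eqref{expansion} with the $O(t^{2-d/2})$ remainder is exactly what is needed for the trace identity \eqref{critical}. Your series gives the same remainder bound (since $\sum_{k\ge2}|K_k|\le(\rme^{t\norm V\norm_\infty}-1-t\norm V\norm_\infty)K_0$), but you should be aware that this estimate is the real payoff of the theorem.

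The Trotter and Perron--Frobenius parts of your proof match the paper's essentially verbatim.
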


\begin{proof}
We will assume throughout the proof that $0<t<1$; once (\ref{TP})
has been proved in this case it can be extended to larger $t$ by
using the semigroup property. Since the quadratic form
\[
Q(f)=Q_0(f)+\int_X V(x)|f(x)|^2\, \rmd x
\]
is a Dirichlet form in the sense of \cite[Theorem~1.3.2]{HKST}, the
operators $\rme^{-Ht}$ are all positivity preserving. The quadratic
forms of $H_0$ and $H$ are comparable, so we may use
Proposition~\ref{logsob} to deduce that for every $t\in (0,1)$ there
is a bounded, measurable integral kernel $K(t,x,y)$ satisfying
$0\leq K(t,x,y)\leq ct^{-d/2}$ if $0<t<1$ and
\[
(\rme^{-Ht}f)(x)=\int_X K(t,x,y) f(y)\, \rmd y
\]
for all $f\in L^2$. Since \[
\rme^{-Ht}=\rme^{-H\eps}\rme^{-H(t-2\eps)}\rme^{-H\eps}
\]
for all $\eps >0$ and $t>2\eps$, we can use the norm analyticity of
$\rme^{-H(t-2\eps)}$ in $L^2$ to deduce the norm analyticity of
$\rme^{-Ht}$ from $L^1$ to $L^\infty$. This implies that
$K(t,\cdot,\cdot)$ depends analytically on $t$ in the
$L^\infty(X\times X)$ norm for $0<t<\infty$.

The upper and lower bounds in (\ref{TP}) are now direct applications
of the Trotter product formula. (\ref{lowerupper}) and (\ref{TP})
together imply that the operator $A=\rme^{-Ht}$ is irreducible for
all large enough $t>0$. Therefore its largest eigenvalue has
multiplicity $1$ by a direct application of
\cite[Theorem~13.3.6]{LOTS} to $A/\norm A\norm$.

The operator $\rme^{-Ht}$ has an operator norm convergent infinite
series expansion involving $\rme^{-H_0t}$ and $V$, but we will use
the more compact expression
\begin{equation}
\rme^{-Ht}=\rme^{-H_0t} -A(t)+B(t)\label{pertseries}
\end{equation}
where
\begin{eqnarray*}
A(t)&=&\int_{s=0}^t\rme^{-H_0(t-s)}V\rme^{-H_0s} \,\rmd s,\\
B(t)&=&\int_{s=0}^t\int_{u=0}^s\rme^{-H_0(t-s)}V%
\rme^{-H(s-u)}V \rme^{-H_0u} \,\rmd u\rmd s.
\end{eqnarray*}
The integrands are norm continuous in $\{ s:0<s<t\}$, resp.
$\{(s,u):0<u<s<t\}$, and they are uniformly bounded in norm, so the
integrals are norm convergent and $A(t),\, B(t)$ depend norm
continuously on $t$.

The equation (\ref{pertseries}) has a version involving integral
kernels, namely
\begin{equation}
K(t,x,y)=K_0(t,x,y)-L(t,x,y)+M(t,x,y)\label{expansion}
\end{equation}
for all $t>0$ and $x,y\in X$, where
\[
L(t,x,y)=\int_{s=0}^t\int_{z\in X} K_0(t-s,x,z)V(z)K_0(s,z,y)\, \rmd
z,
\]
and we will prove that
\begin{equation}
|M(t,x,y)|\leq ct^{2-d/2}\label{Morder}
\end{equation}
for all $t\in (0,1)$ and $x,y\in X$.

We will also prove that all the kernels on the right-hand side of
(\ref{expansion}) are continuous on $(0,1)\times X\times X$, and
this will establish that $K$ is continuous on the same set. The
estimates below involve the uniform norm $\norm \cdot\norm_\infty$
on $\cB=C(X\times X)$.

The integral kernel of $A(t)$ is
\begin{equation}
L(t,x,y)=\int_{s=0}^t L_{s,t}(x,y)\, \rmd s\label{int1}
\end{equation}
where $L_{s,t}:X\times X\to \R$ is defined by
\[
L_{s,t}(x,y)=\int_X K_0(t-s,x,z)V(z)K_0(s,z,y)\, \rmd z.
\]
Now $L_{s,t} \in \cB$ for all $0<s<t$ and $L_{s,t}$ depends norm
continuously on $s,t$ subject to these conditions. We have to prove
that the integral (\ref{int1}) is norm convergent in $\cB$. This
follows from
\begin{eqnarray*}
\int_{s=0}^t \norm L_{s,t}\norm_\infty \, \rmd s&\leq& \norm V\norm_\infty \int_{s=0}^t\sup_{x,y}\left\{\int_X K_0(t-s,x,z)K_0(s,z,y)\, \rmd z\right\} \rmd s\\
&=&\norm V\norm_\infty \int_{s=0}^t\sup_{x,y}\left\{K_0(t,x,y)\right\}\,\rmd s\\
&\leq & c\norm V\norm_\infty t^{1-d/2}
\end{eqnarray*}
provided $0<t<1$.

The integral kernel of $B(t)$ is
\begin{equation}
M(t,x,y)=\int_{s=0}^t\int_{u=0}^s M_{u,s,t}(x,y)\, \rmd u\rmd
s\label{int2}
\end{equation}
where $M_{u,s,t}:X\times X\to \R$ is defined by
\[
M_{u,s,t}(x,y)=\int_{X^2} K_0(t-s,x,z)V(z)K(s-u,z,w)V(w)K_0(u,w,y)\,
\rmd w\rmd z.
\]
Without assuming that $K(s-u,z,w)$ is continuous in $z, w$, one sees
by (\ref{TP}) that $M_{u,s,t} \in \cB$ for all $0<u<s<t$, and that
$M_{u,s,t}$ depends norm continuously on $u,s,t$ subject to these
conditions. We have to prove that the integral (\ref{int2}) is norm
convergent in $\cB$. We have
\begin{eqnarray*}
\norm M_{u,s,t}\norm_\infty &\leq& \norm V\norm_\infty^2 \norm
N_{u,s,t}\norm_\infty
\end{eqnarray*}
where
\begin{eqnarray*}
N_{u,s,t}(x,y)
&=&\int_{X^2} K_0(t-s,x,z)K(s-u,z,w)K_0(u,w,y)\, \rmd w\rmd z\\
&\leq &
\rme^{\norm V\norm_\infty t}\int_{X^2} K_0(t-s,x,z)K_0(s-u,z,w)K_0(u,w,y)\, \rmd w\rmd z\\
&= & \rme^{\norm V\norm_\infty t} K_0(t,x,y)\\
&\leq & \rme^{\norm V\norm_\infty } ct^{-d/2}.
\end{eqnarray*}
provided $0<t<1$. Therefore
\begin{eqnarray*}
\int_{s=0}^t\int_{u=0}^s\norm M_{u,s,t}\norm_\infty \, \rmd u\rmd s
&\leq& b t^{2-d/2}
\end{eqnarray*}
where $b$ depends on $\norm V\norm_\infty$.

\end{proof}

\begin{corollary}
One has
\begin{equation}
\tr[\rme^{-Ht}]=\tr[\rme^{-H_0t}] -t\int_X K_0(t,x,x)V(x)\, \rmd
x+\rho(t)\label{critical}
\end{equation}
where $\rho(t)=O(t^{2-d/2})$ as $t\to 0$.
\end{corollary}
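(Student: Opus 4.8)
The plan is to integrate the kernel identity \eqref{expansion} over the diagonal $x=y$ and use the trace formula for heat semigroups already established in the text. Since $\rme^{-Ht}$ and $\rme^{-H_0t}$ are trace class for every $t>0$ (Mercer's theorem, quoted in Section~\ref{H0}), and the operators $A(t)$, $B(t)$ in \eqref{pertseries} are bounded with continuous kernels, we first observe that $A(t)$ and $B(t)$ are also trace class: indeed $A(t)$ is, up to the bounded factor $V$, a difference of products of trace-class operators $\rme^{-H_0s}$, and similarly for $B(t)$. Hence we may write $\tr[\rme^{-Ht}] = \tr[\rme^{-H_0t}] - \tr[A(t)] + \tr[B(t)]$. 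The identity \eqref{expansion} then identifies the continuous diagonal kernels, so that $\tr[A(t)] = \int_X L(t,x,x)\,\rmd x$ and $\tr[B(t)] = \int_X M(t,x,x)\,\rmd x$, where the integrals converge absolutely because $L,M \in C(X\times X)$.

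Next I would evaluate $\int_X L(t,x,x)\,\rmd x$. By definition,
\[
\int_X L(t,x,x)\,\rmd x = \int_{s=0}^t \int_X \int_X K_0(t-s,x,z)V(z)K_0(s,z,x)\,\rmd z\,\rmd x\,\rmd s.
\]
Performing the $x$-integration first and using the reproducing (Chapman--Kolmogorov) property $\int_X K_0(t-s,x,z)K_0(s,z,x)\,\rmd x = \int_X K_0(s,z,x)K_0(t-s,x,z)\,\rmd x = K_0(t,z,z)$ — together with the symmetry $K_0(t-s,x,z)=K_0(t-s,z,x)$ — collapses the inner double integral to $\int_X K_0(t,z,z)V(z)\,\rmd z$, which is independent of $s$. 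Integrating the constant over $s\in(0,t)$ produces the factor $t$, giving $\tr[A(t)] = t\int_X K_0(t,z,z)V(z)\,\rmd z$. This is precisely the middle term of \eqref{critical}. One must check the use of Fubini here, but this is immediate from the absolute bounds $|V|\le \norm V\norm_\infty$ and $0\le K_0(r,x,y)\le cr^{-d/2}$ on $(0,1)$ that were used repeatedly in the proof of Theorem~\ref{mult1}.

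Finally, set $\rho(t) := \tr[B(t)] = \int_X M(t,x,x)\,\rmd x$. The estimate \eqref{Morder}, namely $|M(t,x,y)|\le ct^{2-d/2}$ for $t\in(0,1)$, integrated over the finite-measure space $X$, yields $|\rho(t)| \le c|X|\,t^{2-d/2}$, i.e. $\rho(t)=O(t^{2-d/2})$ as $t\to 0$, which is the claimed remainder. Assembling the three pieces gives \eqref{critical}. The only mildly delicate point is the trace-class justification and the interchange of trace with the $s$-integral defining $A(t)$; this is handled by noting that $s\mapsto \rme^{-H_0(t-s)}V\rme^{-H_0s}$ is a norm-continuous, uniformly bounded family of trace-class operators on $(0,t)$ with uniformly bounded trace norm (each factor $\rme^{-H_0\eps}$ being trace class), so the Bochner integral defining $A(t)$ is trace class and its trace is the integral of the traces — everything else is the routine Fubini argument above.
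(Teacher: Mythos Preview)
Your proposal is correct and follows exactly the paper's approach: set $x=y$ in \eqref{expansion}, integrate over $X$, and bound the remainder via \eqref{Morder}. You have simply supplied the details the paper leaves implicit, in particular the Chapman--Kolmogorov computation showing $\int_X L(t,x,x)\,\rmd x = t\int_X K_0(t,x,x)V(x)\,\rmd x$ and the trace-class/Fubini justifications; the paper's own proof is two sentences long and takes all of this for granted.
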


\begin{proof}
One puts $x=y$ in (\ref{expansion}) and integrates with respect to
$x$. The bound on $\rho(t)$ follows from (\ref{Morder}).
\end{proof}

\section{The main results}\label{MR}

In this section we assume that $H_0$ satisfies (H1) to (H4) and that
$H=H_0+V$ where $V$ is a real bounded potential on $X$. Both
operators have discrete spectrum, and their eigenvalues are denoted
by $\{ \lam_n\}_{n=1}^\infty$, respectively $\{
\mu_n\}_{n=1}^\infty$, written in increasing order and repeated
according to multiplicity. We assumed that $\lam_1=0$ and proved
that $\mu_1$ has multiplicity $1$; see Theorem~\ref{mult1}. Our following theorem has something in common with \cite[Theorems~2.5, 3.4]{simon}, which obtain a related result for \Schrodinger operators in one and two dimensions subject to (\ref{intVleq0}).

\begin{theorem}\label{premain}
If $\mu_1\geq 0$ and
\begin{equation}
\int_X V(x)\, \rmd x\leq 0\label{intVleq0}
\end{equation}
then $V=0$.
\end{theorem}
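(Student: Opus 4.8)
The plan is to prove this by a direct application of the variational (min--max) principle for the bottom of the spectrum, exploiting the fact that the constant function $\phi_1=|X|^{-1/2}$ from (H4) is simultaneously the ground state of $H_0$ and a legitimate test function for $H$. First I would test the Rayleigh quotient of $H$ against $\phi_1$. Since $V$ is bounded we have $\Quad(H)=\Quad(H_0)\ni\phi_1$, and since $H_0\phi_1=\lam_1\phi_1=0$ we get $Q_0(\phi_1)=0$, hence
\[
Q(\phi_1)=Q_0(\phi_1)+\int_X V(x)|\phi_1(x)|^2\,\rmd x=\frac{1}{|X|}\int_X V(x)\,\rmd x\leq 0
\]
by \eqref{intVleq0}. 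As $\norm{\phi_1}_2=1$, the variational characterization $\mu_1=\inf\{Q(f):f\in\Quad(H),\ \norm f_2=1\}$ yields $\mu_1\leq Q(\phi_1)\leq 0$. Combined with the hypothesis $\mu_1\geq 0$ this forces $\mu_1=0$, and then $0=\mu_1\leq Q(\phi_1)\leq 0$ forces $Q(\phi_1)=0$, i.e.\ $\int_X V\,\rmd x=0$.

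The next step is to upgrade ``$\phi_1$ realizes the infimum'' to ``$\phi_1$ is a ground-state eigenfunction of $H$''. Because $\mu_1=0$ the operator $H$ is non-negative; writing $\nu$ for the spectral measure of $\phi_1$ with respect to $H$, which is a probability measure supported in $[0,\infty)$, we have $\int\lam\,\rmd\nu(\lam)=Q(\phi_1)=0$, and since $\lam\geq 0$ on the support this forces $\nu=\delta_0$. Hence $\phi_1\in\Ker H$, so $H\phi_1=0$. (Theorem~\ref{mult1} additionally tells us this eigenspace is one-dimensional, though that is not needed here.) Since $\Dom(H)=\Dom(H_0)$ we may now compute
\[
V\phi_1=(H-H_0)\phi_1=0-\lam_1\phi_1=0 .
\]
Because $\phi_1=|X|^{-1/2}$ is a nowhere-vanishing constant, this gives $V=0$.

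The argument is short and I do not anticipate a serious obstacle; the only point that requires care is the standard spectral-theorem fact, used in the second paragraph, that a unit vector which attains the infimum of the quadratic form must lie in the ground-state eigenspace. This needs $\phi_1$ to belong to $\Quad(H)$, which holds precisely because $\phi_1$ is the $H_0$-eigenfunction for the eigenvalue $0$ guaranteed by (H4). It is worth noting that the heat-kernel machinery assembled in Sections~\ref{H0}--\ref{H}, in particular the trace expansion~\eqref{critical}, is not needed for this theorem; its role will be to replace the explicit hypothesis~\eqref{intVleq0} by the weaker spectral asymptotic condition $\limsup_{n\to\infty}(\mu_n-\lam_n)\leq 0$, after which Theorem~\ref{premain} can be invoked.
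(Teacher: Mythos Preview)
Your argument is correct and in fact more direct than the paper's. Both proofs begin identically, using the variational inequality $\mu_1\le Q(\phi_1)=|X|^{-1}\int_X V\le 0$ together with $\mu_1\ge 0$ to force $\mu_1=0$ and $Q(\phi_1)=0$. From that point on the routes diverge. The paper introduces the one-parameter family $H_s=H_0+sV$, invokes the simplicity of the ground state (Theorem~\ref{mult1}) to make $F(s)=\mu_1(s)$ analytic, uses concavity and $F(0)=F(1)=0$ to conclude $F\equiv 0$, and then obtains a contradiction by producing $\psi$ with $\langle V\psi,\psi\rangle<0$ and letting $s\to\infty$. You instead observe that $Q(\phi_1)=0=\mu_1$ forces $\phi_1\in\Ker H$ (via the spectral measure of $\phi_1$, or equivalently $\|H^{1/2}\phi_1\|^2=0$), and since $\phi_1\in\Dom(H_0)=\Dom(H)$ you may subtract to get $V\phi_1=H\phi_1-H_0\phi_1=0$; the conclusion $V=0$ is immediate because $\phi_1$ is a nowhere-vanishing constant.

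Your route is shorter and uses strictly less machinery: it needs neither analytic perturbation theory nor the simplicity of $\mu_1$ supplied by Theorem~\ref{mult1}, and it does not require finding an auxiliary test function $\psi$ with $\langle V\psi,\psi\rangle<0$. It exploits the specific feature of (H4) that the unperturbed ground state is an explicit, nowhere-zero function; this makes the identity $V\phi_1=0\Rightarrow V=0$ trivial. The paper's deformation argument, by contrast, never needs to identify the ground state of $H$ and would survive in settings where one only knows $\phi_1$ abstractly---but under the present hypotheses your approach is the cleaner one. Your closing remark about the role of the heat-kernel expansion is also accurate.
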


\begin{proof}
The variational estimate
\[
\mu_1\leq  Q(\phi_1) = |X|^{-1} \int_X V(x)\, \rmd x \leq 0,
\]
where $\phi_1(x)=|X|^{-1/2}$, shows that $\mu_1=0$ under the stated
conditions. We apply the results of the last section to $H_s=H_0+sV$
where $s$ is a real parameter. The smallest eigenvalue
$F(s)=\mu_1(s)$ of $H_s$ has multiplicity $1$ for all $s\in\R$ and
therefore is an analytic function of $s$ by a standard argument in
perturbation theory. It is also concave by a variational argument.
Finally $F(0)=0$ and
\[
F^\pr(0)=\langle V\phi_1,\phi_1\rangle =|X|^{-1}\int_X V(x)\, \rmd x
\leq 0.
\]
Since $F(1)=0$, its concavity implies that $F(s)$ must equal $0$ for
all $s\in [0,1]$. By its analyticity, $F(s)=0$ for all $s\in\R$.

If $V$ does not vanish identically then (\ref{intVleq0}) implies
that its negative part cannot vanish identically. Therefore there
exists a function $\psi\in L^2(X,\rmd x)$ such that $\langle
V\psi,\psi\rangle <0$. An approximation argument allows us to assume
that $\psi\in \Quad(H_0)$. We now conclude that
\[
F(s)=Q_0(\psi) +s \langle V\psi,\psi\rangle <0
\]
for all large enough $s>0$. The contradiction implies that $V=0$.
\end{proof}

The following is our main inverse spectral theorem.

\begin{theorem}\label{maintheorem}
If $\mu_1\geq 0$ and $\limsup_{t\to 0} \sig(t)\leq 0$ where
\[
\sig(t)=t^{d/2-1}  \sum_{n=1}^\infty\left(
\rme^{-\lam_nt}-\rme^{-\mu_nt}\right)
\]
then $V=0$.
\end{theorem}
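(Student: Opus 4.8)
The plan is to combine the heat-trace expansion of the Corollary with the on-diagonal short-time asymptotics of $K_0$ supplied by (H2) and (H3), thereby reducing the hypothesis $\limsup_{t\to 0}\sig(t)\leq 0$ to the integral condition \eqref{intVleq0} and then invoking Theorem~\ref{premain}.

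First I would rewrite $\sig(t)$ explicitly in terms of $V$. Since $\tr[\rme^{-H_0t}]=\sum_{n=1}^\infty\rme^{-\lam_nt}$ and $\tr[\rme^{-Ht}]=\sum_{n=1}^\infty\rme^{-\mu_nt}$, both series converging for every $t>0$, the identity \eqref{critical} gives
\[
\sum_{n=1}^\infty\left(\rme^{-\lam_nt}-\rme^{-\mu_nt}\right)
= t\int_X K_0(t,x,x)V(x)\,\rmd x-\rho(t),
\]
and multiplying by $t^{d/2-1}$ yields
\[
\sig(t)=t^{d/2}\int_X K_0(t,x,x)V(x)\,\rmd x-t^{d/2-1}\rho(t).
\]
Because $\rho(t)=O(t^{2-d/2})$ as $t\to 0$, the term $t^{d/2-1}\rho(t)$ is $O(t)$ and hence tends to $0$.

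Next I would pass to the limit in the remaining integral. By (H2) one has $0\leq t^{d/2}K_0(t,x,x)\leq c$ for all $t\in(0,1)$, so the integrand $t^{d/2}K_0(t,x,x)V(x)$ is dominated in modulus by the function $c\,|V|$, which is integrable since $X$ has finite measure and $V$ is bounded; by (H3) it converges pointwise to $aV(x)$ for every $x\notin N$. Dominated convergence then gives
\[
\lim_{t\to 0}\ t^{d/2}\int_X K_0(t,x,x)V(x)\,\rmd x = a\int_X V(x)\,\rmd x,
\]
so that in fact $\lim_{t\to 0}\sig(t)=a\int_X V(x)\,\rmd x$ exists. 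The hypothesis $\limsup_{t\to 0}\sig(t)\leq 0$ therefore forces $a\int_X V\,\rmd x\leq 0$, and since $a>0$ we conclude $\int_X V\,\rmd x\leq 0$. Together with $\mu_1\geq 0$, Theorem~\ref{premain} now yields $V=0$.

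There is no real obstacle here once the Corollary is available: the only points requiring care are the interchange of limit and integral, which is exactly what (H2) and (H3) are designed to permit, and the identification of the two heat traces with the eigenvalue sums. (One could alternatively bypass Theorem~\ref{premain} and feed $\int_X V\,\rmd x\leq 0$ directly into the concavity-plus-analyticity argument used in its proof, but citing Theorem~\ref{premain} is cleaner.)
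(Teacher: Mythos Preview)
Your proof is correct and follows essentially the same route as the paper: rewrite \eqref{critical} so that $\sig(t)$ differs from $t^{d/2}\int_X K_0(t,x,x)V(x)\,\rmd x$ by an $O(t)$ term, use (H2)--(H3) with dominated convergence to evaluate the limit as $a\int_X V$, and then invoke Theorem~\ref{premain}. If anything, you have spelled out the dominated-convergence step more explicitly than the paper does.
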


\begin{proof}
We rewrite (\ref{critical}) in the form
\begin{eqnarray*}
\lefteqn{t^{d/2} \int_X K_0(t,x,x)V(x)\, \rmd x}\\
&=& t^{d/2-1}\left\{ \tr[\rme^{-H_0t}]-\tr[\rme^{-Ht}]\right\}+t^{d/2-1}\rho(t)\\
&=& \sig(t)+t^{d/2-1}\rho(t)
\end{eqnarray*}
and then take the limit of both sides as $t\to 0$. The left hand
side converges to $a\int_X V(x)\, \rmd x$ where $a>0$, by (H2) and
(H3). We deduce that $\int_X V(x)\, \rmd x\leq 0$ and may therefore
apply Theorem~\ref{premain}.
\end{proof}

The following corollary of Theorem~\ref{maintheorem} contains the
original Ambarzumyan theorem as a special case.

\begin{theorem}\label{maincorollary}
If $\mu_1\geq 0$ and $\limsup_{n\to\infty}(\mu_n-\lam_n)\leq 0$ then
$V=0$.
\end{theorem}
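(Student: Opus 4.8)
The strategy is to derive the hypothesis of Theorem~\ref{maintheorem} from the assumption $\limsup_{n\to\infty}(\mu_n-\lam_n)\leq 0$; that is, I would show that this assumption forces $\limsup_{t\to 0}\sig(t)\leq 0$, where $\sig(t)=t^{d/2-1}\sum_{n=1}^\infty(\rme^{-\lam_nt}-\rme^{-\mu_nt})$, and then invoke Theorem~\ref{maintheorem} directly. Fix $\eps>0$. By the hypothesis there is an integer $N$ with $\mu_n<\lam_n+\eps$ for every $n>N$, and I would split the series defining $\sig(t)$ into the partial sum over $n\leq N$ and the tail over $n>N$, estimating the two pieces separately as $t\to 0$.

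For the finite part I would use the two-sided bound $|\mu_n-\lam_n|\leq\norm V\norm_\infty$ proved in Section~\ref{H}: since $\mu_n\leq\lam_n+\norm V\norm_\infty$ we get $\rme^{-\lam_nt}-\rme^{-\mu_nt}\leq\rme^{-\lam_nt}(1-\rme^{-\norm V\norm_\infty t})\leq\norm V\norm_\infty t$ for each $n$, hence
\[
t^{d/2-1}\sum_{n=1}^N\left(\rme^{-\lam_nt}-\rme^{-\mu_nt}\right)\leq N\norm V\norm_\infty\, t^{d/2},
\]
which tends to $0$ as $t\to 0$ because $d>0$ by (H2). For the tail, the inequality $\mu_n<\lam_n+\eps$ gives $\rme^{-\lam_nt}-\rme^{-\mu_nt}\leq\rme^{-\lam_nt}(1-\rme^{-\eps t})$, so, summing and using $\rme^{-\lam_nt}\geq 0$,
\[
t^{d/2-1}\sum_{n>N}\left(\rme^{-\lam_nt}-\rme^{-\mu_nt}\right)\leq (1-\rme^{-\eps t})\, t^{d/2-1}\sum_{n=1}^\infty\rme^{-\lam_nt}=(1-\rme^{-\eps t})\,t^{d/2-1}\,\tr[\rme^{-H_0t}].
\]
By (H2) together with the identity $\tr[\rme^{-H_0t}]=\int_X K_0(t,x,x)\,\rmd x$ one has $\tr[\rme^{-H_0t}]\leq c|X|\,t^{-d/2}$ for $t\in(0,1)$, whence the right-hand side is at most $c|X|\,t^{-1}(1-\rme^{-\eps t})$, which converges to $c|X|\,\eps$ as $t\to 0$.

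Adding the two estimates gives $\limsup_{t\to 0}\sig(t)\leq c|X|\,\eps$, and since $\eps>0$ is arbitrary we conclude $\limsup_{t\to 0}\sig(t)\leq 0$; Theorem~\ref{maintheorem} then yields $V=0$. I do not expect a genuine obstacle here: the only points needing care are the legitimacy of isolating finitely many terms --- which is harmless because $\sum_{n=1}^\infty\rme^{-\lam_nt}<\infty$ --- and the role of (H2), which is precisely what converts the blow-up of the heat trace like $t^{-d/2}$ into the bounded quantity $t^{-1}(1-\rme^{-\eps t})$ after multiplication by the prefactor $t^{d/2-1}$. It is essential that $d>0$, so that simultaneously $t^{d/2}\to 0$ annihilates the finite part.
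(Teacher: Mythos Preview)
Your argument is correct and follows essentially the same route as the paper: split $\sig(t)$ into a finite head and an infinite tail at the index $N$ supplied by the $\limsup$ hypothesis, kill the head with the elementary bound $|\rme^{-\lam_nt}-\rme^{-\mu_nt}|\leq t|\lam_n-\mu_n|$ (the paper does not even bother with $\norm V\norm_\infty$ here), and control the tail via $1-\rme^{-\eps t}\leq\eps t$ together with the heat-trace bound coming from (H2). The only cosmetic difference is that the paper uses $1-\rme^{-\eps t}\leq\eps t$ to get a clean $c\eps$ bound for all $t\in(0,1)$, whereas you pass to the limit $t^{-1}(1-\rme^{-\eps t})\to\eps$; either way Theorem~\ref{maintheorem} then finishes the job.
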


\begin{proof}
Given $\eps >0$ there exists $N=N(\eps)$ such that $\mu_n-\lam_n\leq
\eps$ for all $n\geq N$.  We then have
\[
\sig(t)=\sig_1(t)+\sig_2(t)
\]
where
\begin{eqnarray*}
\sig_1(t)&=&t^{d/2-1}\sum_{n=1}^{N-1}\left( \rme^{-\lam_nt}-\rme^{-\mu_nt}\right)\\
&\leq & t^{d/2}\sum_{n=1}^{N-1} |\lam_n-\mu_n|,
\end{eqnarray*}
and
\begin{eqnarray*}
\sig_2(t)&=&t^{d/2-1}\sum_{n=N}^\infty\left( \rme^{-\lam_nt}-\rme^{-\mu_nt}\right)\\
&\leq & t^{d/2-1}\sum_{n=N}^\infty\left( \rme^{-\lam_nt}(1-\rme^{-\eps t})\right)\\
&\leq&\eps t^{d/2} \sum_{n=1}^\infty \rme^{-\lam_nt}\\
&\leq c\eps
\end{eqnarray*}
for all $t\in (0,1)$, by an application of (H2). We conclude that
$\limsup_{t\to 0} \sig(t)\leq c\eps$ for all $\eps
>0$, and may therefore apply Theorem~\ref{maintheorem}.
\end{proof}

\begin{example}\label{finite}
Let $H_0$ be a (non-negative) discrete Laplacian on $l^2(X)$ for
some finite, combinatorial graph $X$, with $|X|=n$. One can bypass
many of our calculations by using the elementary formula
\[
\sum_{r=1}^n\mu_r-\sum_{r=1}^n\lam_r=\tr[H-H_0]=\tr[V]=\sum_{x\in X}
V(x).
\]
The relevant conditions on the eigenvalues in this case are
\[
\mu_1\geq 0 \mbox{ and } \sum_{r=1}^n\mu_r\leq\sum_{r=1}^n\lam_r.
\]
However the analysis of the function $F$ in Theorem~\ref{premain}
requires the assumptions (H1) and (H4), and the use of the theory of
irreducible symmetric Markov semigroups.
\end{example}

\begin{theorem}\label{riem}
The hypotheses (H1) to (H4) and therefore the conclusions of
Theorems~\ref{maintheorem} and \ref{maincorollary} are valid if $H_0$ is the Laplace-Beltrami
operator on a compact, connected Riemannian manifold $X$, subject to
Neumann boundary conditions if $X$ has a boundary $\partial X$; the
boundary should satisfy the Lipschitz condition.
\end{theorem}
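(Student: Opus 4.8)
The plan is to verify the four hypotheses (H1)--(H4) for the Laplace--Beltrami operator $H_0$ on a compact connected Riemannian manifold $X$ (with Neumann/Robin conditions on a Lipschitz boundary, if any), since Theorems~\ref{maintheorem} and \ref{maincorollary} then apply directly. Hypothesis (H4) is essentially immediate: on a connected manifold the Neumann Laplacian has $\lam_1=0$ with the constant eigenfunction $\phi_1=|X|^{-1/2}$, and connectedness gives multiplicity one. Hypothesis (H1)---nonnegativity and joint continuity of the heat kernel $K_0(t,x,y)$ on $(0,\infty)\times X\times X$---is classical for closed manifolds and, for manifolds with Lipschitz boundary under Neumann conditions, follows from the fact that the Neumann form $Q_0(f)=\int_X|\grad f|^2\,\rmd x$ is a regular Dirichlet form, so $\rme^{-H_0t}$ is positivity preserving and ultracontractive; continuity of the kernel then follows from the analyticity/smoothing arguments already used in the proof of Theorem~\ref{mult1}.

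For (H2) I would invoke Proposition~\ref{logsob}: it suffices to establish one of its equivalent conditions, e.g. the Nash-type inequality (item~3) or, when $\dim X=d>2$, the Sobolev inequality (item~4) $\norm f\norm_{2d/(d-2)}^2\leq c_4(Q_0(f)+\norm f\norm_2^2)$. On a compact Riemannian manifold with Lipschitz boundary this Sobolev embedding $W^{1,2}(X)\hookrightarrow L^{2d/(d-2)}(X)$ is standard (it can be reduced to the Euclidean case via a finite atlas and a partition of unity, using that the metric is comparable to the Euclidean one in each chart and that a Lipschitz boundary admits the extension/embedding theorems); for $d=1,2$ one uses the corresponding Nash or logarithmic Sobolev inequality instead. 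This yields (H2) with the geometric dimension $d=\dim X$.

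The main work, and the step I expect to be the chief obstacle, is (H3): the on-diagonal short-time asymptotic $\lim_{t\to 0}t^{d/2}K_0(t,x,x)=a$ for a.e.\ $x$, with $a$ independent of $x$. For interior points the classical parametrix construction for the heat equation gives $K_0(t,x,x)\sim(4\pi t)^{-d/2}$, so $a=(4\pi)^{-d/2}$. The delicate point is behaviour near a merely Lipschitz boundary; however, (H3) only requires convergence off a null set, and $\partial X$ has measure zero in $X$, so it is enough to handle interior points---where the boundary is invisible to leading order, since the heat kernel enjoys finite-propagation / off-diagonal Gaussian decay estimates (a consequence of the Davies--Gaffney estimate for the Dirichlet form) that make the contribution of the boundary to $K_0(t,x,x)$ exponentially small as $t\to 0$ for fixed interior $x$. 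Thus the interior parametrix expansion controls the limit, giving (H3) with $a=(4\pi)^{-d/2}>0$. Once all four hypotheses are in hand, the conclusions of Theorems~\ref{maintheorem} and \ref{maincorollary} follow immediately.
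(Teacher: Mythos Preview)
Your proposal is correct and follows essentially the same route as the paper: (H1) and (H4) are dismissed as standard, (H2) is obtained via the Sobolev/Nash inequalities (the paper phrases this as robustness of (H2) under bounded changes of metric, citing \cite{lipschitz}), and (H3) for interior points comes from the Minakshisundaram--Pleijel parametrix together with the principle of ``not feeling the boundary''. The paper is terser and simply cites the relevant literature, but your added detail---identifying $a=(4\pi)^{-d/2}$ and naming the Davies--Gaffney off-diagonal decay as the mechanism behind the boundary's invisibility---is accurate and matches what those references provide.
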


\begin{proof}
All of the hypotheses except (H2) and (H3) are minor variations on results in \cite{HKST}. the Lipschitz boundary condition is needed to obtain
(H2), $d$ being the dimension of $X$. This is a result of a general principle that bounded changes of the metric, and therefore of the local coordinate system, do not affect bounds such as (H2), \cite{lipschitz}. The precise heat kernel
asymptotics required in (H3) holds for all $x\notin \partial X$, and
is a small part of classical results of Minakshisundaram, Pleijel
and others concerning the small time asymptotics of the heat kernel;
see \cite{BGO,chavel,gilkey,MS}. In this context the nature of the
boundary is irrelevant by the principle of `not feeling the
boundary'; see Theorem~\ref{NFTB} and \cite{hsu}.
\end{proof}

\section{Finite quantum graphs}\label{QG}

In this section we prove that Theorems~\ref{maintheorem} and \ref{maincorollary} are
applicable when $X$ is a finite connected quantum graph. We assume
that $X$ is the union of a finite number of edges $e\in\cE$, each of
finite length. Each edge terminates at two vertices out of a finite
set $\cV$, and we assume that the graph as a whole is connected. The
operator $H_0$ acts in $L^2(X,\rmd x)$ by the formula
$H_0f(x)=-\frac{\rmd^2 f}{\rmd x^2}$, subject to Kirchhoff boundary
conditions at each vertex; more precisely we require that all
functions in the domain of $H_0$ are continuous and that the sum of
the outgoing derivatives vanishes at each vertex. All of our
calculations depend on the fact that the quadratic form associated
with $H_0$ is given by
\[
Q_0(f)=\int_X |f^\pr (x)|^2 \, \rmd x
\]
with domain $\Quad(H_0)=\Dom(H_0^{1/2})=W^{1,2}(X)$ where this is
the space of all functions $f$ whose restriction to any edge $e$
lies in $W^{1,2}(e)$, together with the requirement that $f$ is
continuous at every vertex. We observe that $W^{1,2}(X)$ is
continuously embedded in $C(X)$. It is immediate from its definition
that $Q_0$ is a Dirichlet form, so the operators $\rme^{-H_0t}$ are
positivity preserving for all $t\geq 0$. The identity $H_01=0$
implies that $\rme^{-H_0 t}1=1$ for all $t\geq 0$, so $\rme^{-H_0
t}$ is a symmetric Markov semigroup.

\begin{lemma} The operator $H_0$ on $L^2(X,\rmd x)$ satisfies (H1), (H2) and (H4).
\end{lemma}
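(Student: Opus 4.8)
The plan is to verify the three hypotheses (H1), (H2), (H4) for the Kirchhoff Laplacian $H_0$ on the finite connected quantum graph $X$, with $d=1$.

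First I would dispose of (H4), since it is essentially contained in the setup already described: the connectedness of $X$ together with the fact that $Q_0(f)=\int_X|f'|^2\,\rmd x$ is a Dirichlet form makes $\rme^{-H_0t}$ an irreducible symmetric Markov semigroup, and irreducibility forces the bottom eigenvalue $\lam_1=0$ to be simple with eigenfunction the constant $\phi_1=|X|^{-1/2}$; the only thing to note is that $f\in W^{1,2}(X)$ with $Q_0(f)=0$ must be constant on each edge and, by the continuity requirement at the vertices plus connectedness, globally constant. For (H1), the point is to produce a continuous nonnegative heat kernel. One route is to note that $W^{1,2}(X)\hookrightarrow C(X)$ is compact, so $H_0$ has discrete spectrum; then, since all eigenfunctions $\phi_n$ lie in $W^{1,2}(X)\subset C(X)$ and are therefore bounded and continuous, the spectral series $K_0(t,x,y)=\sum_n\rme^{-\lam_nt}\phi_n(x)\phi_n(y)$ converges uniformly on $[\alp,\infty)\times X\times X$ once one has the ultracontractivity bound from (H2), giving continuity; nonnegativity comes from the positivity-preserving property of $\rme^{-H_0t}$. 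So really (H1) reduces to (H2) plus the Markov/Dirichlet structure already in hand.

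The substantive step is (H2): one must show $0\le K_0(t,x,x)\le c\,t^{-1/2}$ for $t\in(0,1)$. By Proposition~\ref{logsob} (with $d=1$, noting $\rme^{-H_0t}$ is positivity preserving and $\norm\rme^{-H_0t}\norm_{L^\infty\to L^\infty}\le 1$ because it is Markov), it suffices to establish any one of the equivalent functional inequalities for $Q_0$ — most cleanly the Nash-type inequality (item 3), or, since $d=1<2$ blocks the Sobolev form (item 4), the ultracontractivity estimate (item 1) or the logarithmic Sobolev inequality (item 2). My approach would be to prove the ultracontractivity bound $\norm\rme^{-H_0t}f\norm_\infty\le c_1 t^{-1/4}\norm f\norm_2$ for $t\in(0,1)$ directly from the geometry: on each edge $e\cong(0,\ell_e)$ the form $\int_e|f'|^2$ dominates (up to an additive $\norm f\norm_2^2$) the one-dimensional Sobolev/Gagliardo–Nirenberg inequality $\norm f\norm_{L^\infty(e)}\le C(\ell_e)(\norm f'\norm_{L^2(e)}+\norm f\norm_{L^2(e)})$, and one assembles these edge estimates — using that there are finitely many edges and that functions in $\Quad(H_0)$ restrict to $W^{1,2}$ on each edge — into a global bound $\norm f\norm_\infty^2\le c(Q_0(f)+\norm f\norm_2^2)^{1/2}\norm f\norm_2^{\,?}$ of Nash type. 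The cleanest packaging is probably: prove the one-dimensional Nash inequality $\norm f\norm_2^{6}\le c(\norm f'\norm_2^2+\norm f\norm_2^2)\norm f\norm_1^{4}$ on each edge, sum over the finitely many edges $e\in\cE$ (the $L^1$ and $L^2$ norms are additive in the relevant power-after-summation sense, or one uses that $\ell^1\hookrightarrow\ell^{3}$ on the finite index set $\cE$), and thereby obtain item~3 of Proposition~\ref{logsob} with $d=1$; that proposition then hands back (H2) and the heat-kernel bound.

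The main obstacle is the passage from per-edge Sobolev estimates to a single global inequality in the exact algebraic form demanded by Proposition~\ref{logsob}: the subtlety is that $Q_0$ couples the edges only through the continuity condition at vertices, so while $Q_0(f)=\sum_{e\in\cE}\int_e|f'|^2$ splits as a sum, the constant functions (and hence the "$+\norm f\norm_2^2$" correction) and the mixed $L^p$ norms must be handled so that the finitely many edge-wise constants combine into one. I expect this to be purely a matter of careful bookkeeping with Hölder's inequality on the finite set $\cE$ and the elementary inequality $(\sum a_e)^\theta\le\#\cE^{\,\theta-1}\sum a_e^\theta$ for $\theta\ge1$, rather than anything deep; everything else follows from results already quoted.
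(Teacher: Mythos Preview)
Your proposal is correct, and for (H1) and (H4) it matches the paper's argument essentially verbatim. For (H2) you take a genuinely different route. The paper introduces the auxiliary operator $H_1$ obtained by \emph{decoupling} the graph: impose independent Neumann boundary conditions at both ends of every edge, so that $H_1$ acts as $-\rmd^2/\rmd x^2$ on each $L^2(e)$ separately. Its heat kernel is then explicit (the cosine expansion on each interval) and one reads off $0\le K_e(t,x,y)\le c_1 t^{-1/2}$ directly. By Proposition~\ref{logsob} this yields the functional inequalities for $Q_1$; since $\Quad(H_0)=W^{1,2}(X)$ is a \emph{subspace} of $\Quad(H_1)=\bigoplus_e W^{1,2}(e)$ on which the two forms agree, those inequalities restrict to $Q_0$ for free, and a second application of Proposition~\ref{logsob} returns (H2) for $H_0$. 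This sidesteps exactly the assembly step you flagged as the main obstacle: no H\"older bookkeeping on the finite index set $\cE$ is needed, because form-domain monotonicity transfers the inequality automatically. Your direct approach also works (the combinatorics is indeed routine: bound $(\sum_e \|f\|_{L^2(e)}^2)^3\le |\cE|^2\sum_e\|f\|_{L^2(e)}^6$ and use $\|f\|_{L^1(e)}\le\|f\|_{L^1(X)}$), but the paper's comparison-operator device is cleaner and more portable to other settings where per-piece functional inequalities would be awkward to glue.
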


\begin{proof}
If we disconnect $X$ by imposing Neumann boundary conditions
independently at the end of each edge, then we obtain a new operator
$H_1$ associated a quadratic form $Q_1$; this has the same formula
as $Q_0$, but a larger domain, consisting of all functions $f\in
L^2(X,\rmd x)$ such that the restriction of $f$ to any edge $e$ lies
in $W^{1,2}(e)$. The operator $H_1$ acts independently in each
$L^2(e,\rmd x)$ and its heat kernel in $e$ is of the form
\[
K_e(t,x,y)=\frac{1}{a}+\frac{2}{a}\sum_{n=1}^\infty\rme^{-\pi^2n^2
t/a^2} \cos(\pi n x/a)\cos(\pi n y/a),
\]
where we parametrize  $e$ by $(0,a)$. One readily sees that each
$K_e$ is continuous and that
\[
|K_e(t,x,y)|\leq c_1 t^{-1/2}
\]
for some $c_1>0$, all $x,y\in e$ and all $0<t<1$. Moreover
$K_e(t,x,y)\geq 0$ because $Q_1$ is a Dirichlet form. It follows
from these observations that the various equivalent conditions of
Proposition~\ref{logsob} hold for $Q_1$ with $d=1$. Since $Q_0$ is a
restriction of $Q_1$, Proposition~\ref{logsob} implies that
\[
0\leq K_0(t,x,y)\leq c_2 t^{-1/2}
\]
for some $c_2>0$, all $x,y\in X$ and all $0<t<1$. This completes the
proof of (H2).

To prove (H1) we note that if $\{ \phi_n\}_{n=1}^\infty$ is an
orthonormal basis of eigenfunctions of $H_0$ and $\lam_n$ are the
corresponding eigenvalues, then
\[
\phi_n\in \Dom(H_0)\subseteq \Dom(H_0^{1/2})=W^{1,2}(X)\subset C(X).
\]
Since the series
\[
K_0(t,x,y)=\sum_{n=1}^\infty \rme^{-\lam_n t} \phi_n(x)\phi_n(y)
\]
converges uniformly on $[\alp,\infty)\times X\times X$ for every
$\alp >0$ by \cite[Theorem~2.1.4]{HKST}, we deduce that $K_0$ is
continuous on $(0,1)\times X\times X$.

The proof of (H4) depends on the observation that $H_0\phi=0$ if and
only if $\phi\in W^{1,2}(X)\subset C(X)$ and
\[
0=Q_0(\phi)=\int_X|\phi^\pr(x)|^2\, \rmd x.
\]
This implies that $\phi$ is constant. Therefore $0$ is an eigenvalue
of multiplicity $1$.

\end{proof}

Our final task is to prove (H3).

\begin{lemma}
Let $K_a(t,x,y)$ be the heat kernel of the operator
$-\frac{\rmd^2}{\rmd x^2}$ acting in $L^2(-a,a)$ subject to
Dirichlet boundary conditions at $\pm a$. Then
\begin{equation}
0\leq K_a(t,x,y)\leq K_\infty(t,x,y)=(4\pi
t)^{-1/2}\rme^{-|x-y|^2/(4t)}\label{interval1}
\end{equation}
for all $t>0$ and $x,y\in (-a,a)$. Moreover
\begin{equation}
1\geq \int_{-a}^a K_a(t,0,x)\, \rmd x \geq
1-4\rme^{-a^2/(8t)}\label{interval2}
\end{equation}
and
\begin{equation}
(4\pi t)^{-1/2}\geq K_a(t,0,0)\geq (4\pi t)^{-1/2}\left(
1-15\rme^{-a^2/(4t)}\right)\label{interval3}
\end{equation}
for all $t>0$.
\end{lemma}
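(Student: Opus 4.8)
The plan is to realise the Dirichlet heat kernel $K_a$ by the method of images and then to extract all four inequalities from elementary Gaussian tail estimates. Throughout write $G(t,z)=(4\pi t)^{-1/2}\rme^{-z^2/(4t)}$, so that $K_\infty(t,x,y)=G(t,x-y)$, and put $q=\rme^{-a^2/(4t)}$. \textbf{Step 1 (the image formula).} First I would prove that, for all $t>0$ and $x,y\in(-a,a)$,
\[
K_a(t,x,y)=\sum_{n\in\Z}G(t,x-y+4na)-\sum_{n\in\Z}G(t,x+y+(4n+2)a).
\]
Both series converge locally uniformly on $(0,\infty)\times\R\times\R$ together with all their $x$- and $y$-derivatives, by domination by a fixed convergent Gaussian series, so their difference $u(t,x,y)$ solves the heat equation in $x$. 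Since $G$ is even, a short relabelling of the summation index in each series shows that $u(t,\pm a,y)=0$ for every $y$. Finally, for fixed interior $x,y$ every term other than the $n=0$ term $G(t,x-y)$ of the first series has argument bounded away from $0$ uniformly in $t$, hence tends to $0$ as $t\to0$, while $G(t,x-y)\to\delta(x-y)$; thus $u(t,\cdot,y)\to\delta_y$, and by uniqueness for the heat equation on $(-a,a)$ with Dirichlet boundary conditions, $u=K_a$. The lower bound $K_a\geq0$ in \eqref{interval1} requires only that $\rme^{-Ht}$ is positivity preserving, its form $\int_{-a}^a|f^\pr(x)|^2\,\rmd x$ on $W^{1,2}_0(-a,a)$ being a Dirichlet form.

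\textbf{Step 2 (the upper bound in \eqref{interval1}).} This is domain monotonicity of Dirichlet heat kernels. One can get it either from the maximum principle applied to $K_\infty-K_a$ — which solves the heat equation on $(-a,a)$ with zero initial data and non-negative boundary values $K_\infty(t,\pm a,y)$ — or directly from Step 1: for $x,y\in(-a,a)$ and each $n\geq1$, pair $G(t,x-y+4na)$ with $G(t,x+y+(4n-2)a)$ and $G(t,x-y-4na)$ with $G(t,x+y-(4n-2)a)$; the distance inequalities $4na+(x-y)\geq(4n-2)a+(x+y)$ and $4na-(x-y)\geq(4n-2)a-(x+y)$ both reduce to $|y|\leq a$, and give $K_a=G(t,x-y)+(\mbox{non-positive remainder})\leq K_\infty$.

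\textbf{Step 3 (the remaining estimates).} The upper bounds in \eqref{interval2} and \eqref{interval3} are then immediate from \eqref{interval1}: $\int_{-a}^aK_a(t,0,x)\,\rmd x\leq\int_\R K_\infty(t,0,x)\,\rmd x=1$ and $K_a(t,0,0)\leq K_\infty(t,0,0)=(4\pi t)^{-1/2}$. For the lower bounds I would integrate the image formula term by term, which is legitimate because each of its two series has non-negative terms. Using the elementary bound $\int_b^\infty G(t,u)\,\rmd u\leq\half\rme^{-b^2/(4t)}$ one obtains
\[
\int_{-a}^aK_a(t,0,x)\,\rmd x=\int_{-a}^aG(t,u)\,\rmd u+R_1-R_2,
\]
where $R_1\geq0$ gathers the shifted copies coming from the first series (all supported in $\{|u|\geq3a\}$) and $0\leq R_2\leq\int_{|u|\geq a}G(t,u)\,\rmd u\leq q$ gathers the second series; since $\int_{-a}^aG(t,u)\,\rmd u\geq1-q$, this gives $\int_{-a}^aK_a(t,0,x)\,\rmd x\geq1-2q\geq1-4\rme^{-a^2/(8t)}$, which is \eqref{interval2}. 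For \eqref{interval3}, setting $x=y=0$ in the image formula yields $K_a(t,0,0)=(4\pi t)^{-1/2}+2\sum_{k\geq1}G(t,4ka)-2\sum_{k\geq0}G(t,(4k+2)a)$, hence
\[
K_a(t,0,0)\geq(4\pi t)^{-1/2}-2\sum_{k\geq0}G(t,(4k+2)a)=(4\pi t)^{-1/2}\left(1-2\sum_{k\geq0}q^{(4k+2)^2}\right),
\]
and it remains only to check $2\sum_{k\geq0}q^{(4k+2)^2}\leq15q$. If $q>1/15$ nothing is needed, since then the right-hand side of \eqref{interval3} is already negative whereas $K_a(t,0,0)\geq0$; if $q\leq1/15$, then $q^{(4k+2)^2}\leq q^{4k+2}$ gives $2\sum_{k\geq0}q^{(4k+2)^2}\leq 2q^2/(1-q^4)\leq15q$.

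\textbf{Where the work is.} Essentially all of it is in Step 1: pinning down the period $4a$ and the two reflected sources, and checking the boundary cancellation; after that every inequality is a one-line Gaussian estimate, the only mild nuisance being the trivial case split that yields the constant $15$. For \eqref{interval2} in isolation one can avoid Step 1 altogether — $\int_{-a}^aK_a(t,0,x)\,\rmd x$ is the probability that the Brownian motion with generator $-\rmd^2/\rmd x^2$ started at $0$ remains in $(-a,a)$ up to time $t$, which by the reflection principle is at least $1-2\,{\bf P}_0(\sup_{s\leq t}B_s\geq a)=1-4\int_a^\infty G(t,u)\,\rmd u$ — but Step 1 is in any case forced by the lower bound in \eqref{interval3}.
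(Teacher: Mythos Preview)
Your argument is correct and fully self-contained, but the route differs from the paper's. You build everything on the explicit image expansion of $K_a$ and then read off all four bounds by elementary Gaussian tail arithmetic; the paper instead dispatches \eqref{interval1} by domain monotonicity, cites Poisson summation or the Brownian reflection principle for \eqref{interval2}, and---this is the substantive difference---derives \eqref{interval3} \emph{from} \eqref{interval2} by a bootstrap: with $f(x)=K_a(t,0,x)$ and $g(x)=K_\infty(t,0,x)$, the semigroup identity gives
\[
K_\infty(2t,0,0)-K_a(2t,0,0)=\int_\R(g^2-f^2)\,\rmd x\leq 2\norm g\norm_\infty\int_\R(g-f)\,\rmd x,
\]
and \eqref{interval2} then controls $\int(g-f)$; replacing $t$ by $t/2$ yields the constant $8\sqrt{2}<15$. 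Your method has the advantage of being explicit and of producing the sharper intermediate bound $1-2q$ for \eqref{interval2}. The paper's method buys portability: the $g^2-f^2$ argument uses only positivity, the semigroup property, the uniform bound $K_0\leq ct^{-1/2}$, and a mass estimate of the type \eqref{interval2}, so it transfers verbatim to the quantum-graph heat kernel in Theorem~\ref{NFTB}, where no image formula exists. In effect the paper proves the interval lemma by the very mechanism it is about to reuse on the graph, whereas your image computation, though clean here, would not generalise to that setting.
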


\begin{proof}
The inequality (\ref{interval1}) follows directly from the
monotonicity of the Dirichlet heat kernel as a function of the
region. Sharper versions of the inequalities (\ref{interval2}) and (\ref{interval3}) may be proved by applying the Poisson summation formula to the explicit eigenfunction expansion of $K_a$, \cite{vandenberg}. An alternative proof of (\ref{interval2}) based on the properties of the underlying Brownian motion in given in \cite[Lemma 6.5]{DvdB}.

One may prove (\ref{interval3}) from (\ref{interval2}) as follows.
We define $f,g:\R\to (0,\infty)$ by
\begin{eqnarray*}
f(x)&=& \begin{choices}
K_a(t,0,x)&\mbox{ if $|x|\leq a$,}\\
0&\mbox{ otherwise,}
\end{choices}
\\
g(x)&=& K_\infty(t,0,x)=(4\pi t)^{-1/2}\rme^{-x^2/(4t)}.
\end{eqnarray*}
so that $0\leq f(x)\leq g(x)\leq (4\pi t)^{-1/2}$ for all $x\in \R$
and $\int_\R g(x)\, \rmd x =1$. Therefore
\begin{eqnarray*}
0&\leq& (8\pi t)^{-1/2}-K_a(2t,0,0)\\
&=& K_\infty(2t,0,0)-K_a(2t,0,0)\\
&=& \int_R\{ g(x)^2-f(x)^2 \} \, \rmd x\\
&\leq& \int_R \{ g(x)-f(x)\} 2g(x) \, \rmd x\\
&\leq & (\pi t)^{-1/2} \int_R  \{ g(x)-f(x)\} \, \rmd x\\
&=& (\pi t)^{-1/2}\left( 1-\int_{-a}^a f(x)\, \rmd x\right)\\
&\leq& (\pi t)^{-1/2} 4\rme^{-a^2/(8t)}.
\end{eqnarray*}
by (H2). We finally obtain (\ref{interval3}) upon replacing $t$ by
$t/2$.
\end{proof}

We prove (H3) by using the principle of `not feeling the boundary', \cite{hsu}.

\begin{theorem}\label{NFTB}
The operator $H_0$ on $L^2(X,\rmd x)$ satisfies (H3), the
exceptional set $N$ being the set of all vertices on $X$.
\end{theorem}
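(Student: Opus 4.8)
The plan is to prove the sharper statement that $\lim_{t\to 0}t^{1/2}K_0(t,x,x)=(4\pi)^{-1/2}$ for every point $x$ that is not a vertex; this is precisely (H3) with $d=1$, $a=(4\pi)^{-1/2}$ and $N$ the (finite, hence null) set of vertices. The underlying idea is the classical ``not feeling the boundary'' principle: since such an $x$ lies in the interior of some edge $e$, its heat kernel at small times should be insensitive to the vertices, which are at a fixed positive distance from $x$. I would make this precise by squeezing $K_0(t,x,x)$ between two explicit model heat kernels---one coming from a short Dirichlet interval centred at $x$, the other from the edge $e$ carrying Neumann conditions---and then using the exponential remainder estimate \eqref{interval3}.

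For the lower bound, choose $\delta>0$ so small that, parametrising $e$ near $x$, the interval $I=(x-\delta,x+\delta)$ lies in the interior of $e$ and so meets no vertex. Extension by zero embeds $W^{1,2}_0(I)$ into $\Quad(H_0)=W^{1,2}(X)$ without altering the value of the Dirichlet form, so the domain monotonicity of Dirichlet heat kernels (the same principle invoked for \eqref{interval1}) gives $K_0(t,u,v)\geq K_I^{\rm D}(t,u,v)$ for all $t>0$ and $u,v\in I$, where $K_I^{\rm D}$ is the heat kernel of $-\rmd^2/\rmd x^2$ on $I$ with Dirichlet conditions. Taking $u=v=x$, the midpoint of $I$, and applying \eqref{interval3} with half-length $\delta$ yields
\[
K_0(t,x,x)\geq K_\delta(t,0,0)\geq (4\pi t)^{-1/2}\left(1-15\,\rme^{-\delta^2/(4t)}\right),
\]
whence $\liminf_{t\to 0}t^{1/2}K_0(t,x,x)\geq (4\pi)^{-1/2}$.

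For the upper bound I would use the disconnected operator $H_1$ introduced in the proof of (H1), (H2), namely $X$ cut open at every vertex with Neumann conditions imposed at the resulting edge-ends. Since $\Quad(H_0)\subset\Quad(H_1)$ with $Q_0=Q_1$ on the smaller domain, the same monotonicity principle gives $K_0(t,u,v)\leq K_1(t,u,v)$ for all $t>0$. On the edge $e$, parametrised as $(0,L)$, the kernel $K_1$ is the Neumann heat kernel, which by the method of images equals $\sum_{k\in\Z}(K_\infty(t,u,v+2kL)+K_\infty(t,u,2kL-v))$, with $K_\infty$ as in \eqref{interval1}. Putting $u=v=x$, the $k=0$ term of the first sum is $(4\pi t)^{-1/2}$, while every remaining term is at most $(4\pi t)^{-1/2}\rme^{-\eta^2/(4t)}$ with $\eta=2\min\{x,L-x\}>0$; summing this rapidly convergent tail gives $K_1(t,x,x)=(4\pi t)^{-1/2}+O(t^{-1/2}\rme^{-\eta^2/(4t)})$, so $\limsup_{t\to 0}t^{1/2}K_0(t,x,x)\leq (4\pi)^{-1/2}$.

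The two estimates together establish the limit, and hence (H3). The one point that needs care is the direction of the two comparisons: a short Dirichlet interval sits \emph{inside} $X$ with more restrictive boundary conditions, so it provides the lower bound, whereas the disconnected Neumann operator has the \emph{larger} form domain, so it provides the upper bound; behind both lies the standard fact for Dirichlet forms that domination of the forms descends to a pointwise inequality between the (non-negative, continuous) heat kernels. Once that is fixed the model computations are routine, \eqref{interval3} doing the essential work on the lower side and the image series on the upper side.
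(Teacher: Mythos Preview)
Your lower bound is fine and coincides with the paper's: the Dirichlet interval about $x$ embeds in $X$ by extension by zero, so $K_a\leq K_0$ on $I$, and \eqref{interval3} gives $\liminf_{t\to 0}t^{1/2}K_0(t,x,x)\geq(4\pi)^{-1/2}$.

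The gap is in your upper bound. You assert that $\Quad(H_0)\subset\Quad(H_1)$ with equal form values implies the \emph{pointwise} kernel inequality $K_0(t,u,v)\leq K_1(t,u,v)$, calling this ``the same monotonicity principle''. It is not the same principle. Domain monotonicity for Dirichlet heat kernels works because extension by zero gives genuine semigroup domination ($|\rme^{-tH_I^D}f|\leq \rme^{-tH_0}|f|$), not merely a form inequality. In your situation the domination fails outright: take $u$ and $v$ on different edges; then $K_1(t,u,v)=0$ while $K_0(t,u,v)>0$. Restricting to the diagonal does not save the argument either, since $\lambda\mapsto\rme^{-\lambda t}$ is not operator monotone, so $H_0\geq H_1$ in the form sense does not yield $\rme^{-H_0t}\leq\rme^{-H_1t}$ in any sense that forces $K_0(t,x,x)\leq K_1(t,x,x)$. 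Whether this diagonal inequality happens to hold for Kirchhoff versus disconnected Neumann is a separate (and nontrivial) question; your stated justification does not establish it.

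The paper avoids this difficulty entirely. For the upper bound it never compares $K_0$ with $K_1$; instead it bounds the \emph{difference} $K_0(2t,z,z)-K_a(2t,z,z)$ directly. Writing $g(\cdot)=K_0(t,z,\cdot)$ and $f(\cdot)=K_a(t,z,\cdot)$, the semigroup property gives this difference as $\int_X(g^2-f^2)\,\rmd x\leq 2\norm g\norm_\infty\int_X(g-f)\,\rmd x$. Now $\norm g\norm_\infty\leq ct^{-1/2}$ by (H2), and $\int_X(g-f)\,\rmd x=1-\int_I K_a(t,z,x)\,\rmd x\leq 4\rme^{-a^2/(8t)}$ by the Markov property of $\rme^{-H_0t}$ together with \eqref{interval2}. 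Combining this with \eqref{interval3} yields $\limsup_{t\to 0}t^{1/2}K_0(t,z,z)\leq(4\pi)^{-1/2}$. The point is that only previously established facts about $K_0$ itself --- continuity, (H2), and conservativeness --- are used; no comparison with a second model operator on the upper side is needed.
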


\begin{proof}
This repeats the argument used to prove (\ref{interval3}). We assume
that $z\in X$ is not a vertex and that $a>0$ is its distance from
the closest vertex. We then let $K_a$ denote the Dirichlet heat
kernel for the interval $I$ with centre $z$ and length $2a$. Our
task is to compare the heat kernel $K$ of $X$ with $K_a$. We use the
following facts.
\[
0\leq K_a(t,x,y)\leq K_0(t,x,y)
\]
for all $t>0$ and $x,y\in X$, where we put $K_a(t,x,y)=0$ if $x$ or
$y$ does not lie in the interval $I$. In addition $0\leq K_0(t,x,y)
\leq ct^{-1/2}$ for all $0<t<1$ and all $x,y\in X$. Finally
\[
\int_XK_0(t,x,y)\, \rmd y=1
\]
for all $t>0$ and $x\in X$.

Let $f(x)=K_a(t,z,x)$ and $g(x)=K_0(t,z,x)$ so that $0\leq f(x)\leq
g(x)$ for all $x\in X$. We have
\begin{eqnarray*}
0&\leq & \int_X \{ g(x)-f(x) \} \, \rmd x = 1-\int_I K_a(t,z,x)\, \rmd x\\
&\leq & 4\rme^{-a^2/(8t)}
\end{eqnarray*}
by (\ref{interval2}). Therefore
\begin{eqnarray*}
K_0(2t,z,z)-K_a(2t,z,z)&= & \int_X \{ g(x)^2-f(x)^2 \} \, \rmd x\\
&\leq & \int_X \{ g(x)-f(x)\} 2 g(x)\, \rmd x\\
&\leq & c_1t^{-1/2} \rme^{-a^2/(8t)}.
\end{eqnarray*}
The theorem follows by combining this with (\ref{interval3}).
\end{proof}

Department of Mathematics\\
King's College London\\
Strand\\
London WC2R 2LS\\
UK

E.Brian.Davies@kcl.ac.uk

\end{document}